% versione 31 agosto 2016

\documentclass[12pt,a4paper]{amsart}
\usepackage{amssymb,latexsym,amsmath,amscd,graphicx,url,color}
\setlength{\parindent}{.4 in}
\setlength{\textwidth}{6.3 in}
\setlength{\topmargin} {-.3 in}
\setlength{\evensidemargin}{0 in}
\setlength{\oddsidemargin}{0 in}
\setlength{\footskip}{.3 in}
\setlength{\headheight}{.3 in}
\setlength{\textheight}{8.8 in}
\setlength{\parskip}{.1 in}

\theoremstyle{plain}
\newtheorem{thm}{Theorem}[section]

\newtheorem{lemma}[thm]{Lemma}

\theoremstyle{definition}
\newtheorem{defn}[thm]{Definition}
\newtheorem{rmk}[thm]{Remark}

\newcommand{\NN}{{\mathbb N}}
\newcommand{\QQ}{{\mathbb Q}}

\newcommand{\ZZ}{{\mathbb Z}}

\DeclareMathOperator{\HS}{HS}
\DeclareMathOperator{\GL}{GL}
\DeclareMathOperator{\gin}{gin}
\DeclareMathOperator{\inid}{in}

\DeclareMathOperator{\Supp}{Supp}
\DeclareMathOperator{\Ker}{Ker}

\title{Multigraded generic initial ideals of determinantal ideals}

\author{A. Conca}
\address{Dipartimento di Matematica, 
Universit\`a di Genova, Via Dodecaneso 35, 
I-16146 Genova, Italy}
\email{conca@dima.unige.it}

\author{E. De Negri}
\address{Dipartimento di Matematica, 
Universit\`a di Genova, Via Dodecaneso 35, 
I-16146 Genova, Italy}
\email{denegri@dima.unige.it}

\author{E. Gorla}
\address{Institut de Math\'ematiques, Universit\'e de Neuch\^atel, Rue Emile-Argand 11, CH-2000
  Neuch\^atel, Switzerland}  
\email{elisa.gorla@unine.ch}

\thanks{The first and the second authors were partially supported by GNSAGA-INdAM. The third author was partially supported by the Swiss National Science
  Foundation under grant no. 200021\_150207.}

\subjclass[2010]{Primary }

\begin{document}

\maketitle 

\dedicatory{{\small{\em \begin{center}Dedicated to our friend, teacher,  and colleague Winfried Bruns\\ on the occasion of his seventieth birthday.\end{center}}}}

\begin{abstract}
Let $I$ be either the ideal of maximal minors or the ideal of $2$-minors of a row graded or column graded matrix of linear forms $L$.  In \cite{CDG1,CDG2}  we showed that  $I$ is a Cartwright-Sturmfels ideal, that is,  the multigraded generic initial ideal $\gin(I)$ of $I$ is radical (and essentially independent of the term order chosen). In this paper we describe  generators  and prime decomposition of $\gin(I)$ in terms of data  related to the linear dependences  among the row or columns of  the submatrices of $L$. In the case of $2$-minors we also give a closed formula for its multigraded Hilbert series.
\end{abstract}

\section*{Introduction}

Ideals of minors of matrices of linear forms are widely studied within commutative algebra and algebraic geometry. For example, they arise in classical constructions in invariant theory and define certain Veronese and Segre varieties. One often concentrates on matrices whose entries satisfy extra conditions, such as generic matrices (whose entries are distinct variables), generic symmetric matrices, catalecticant matrices, and 1-generic matrices. In this paper, we study ideal of minors of matrices which are homogeneous with respect to a multigrading. More precisely, given a standard $\ZZ^m$-multigraded polynomial ring we study the ideals of maximal minors or 2-minors of $m\times n$ matrices with the property that all the entries in the $i$-th row are homogeneous of degree $e_i\in \ZZ^m$. We call such matrices row graded. Similarly one can define column graded matrices. 

Gr\"obner bases of ideals of minors have been extensively studied, and a wealth of results is available on ideals of minors of generic matrices, generic symmetric matrices, and catalecticant matrices for specific term orders. In particular, in~\cite{BZ} and~\cite{SZ} Bernstein, Sturmfels, and Zelevinsky showed that the maximal minors of a matrix of variables are a universal Gr\"obner basis of the ideal that they generate. Sturmfels \cite{S} and Villarreal \cite{V} produced a universal Gr\"obner basis of the ideal of $2$-minors of a matrix of variables. These results were generalized in~\cite{AST,B,CS,C,K}. In \cite{CDG1} and~\cite{CDG2} we studied universal Gr\"obner bases of the ideals of maximal minors and $2$-minors of row or column graded matrices. In~\cite{CDG2} we introduced two new families of ideals, that we call Cartwright-Sturmfels and Cartwright-Sturmfels$^*$. The families are defined in terms of properties of their multigraded generic  initial ideals, and they are instrumental to our study of universal Gr\"obner bases. Some of the main results of~\cite{CDG1} and~\cite{CDG2} consist of showing that the ideals of minors that we study are Cartwright-Sturmfels, and that some of them are also Cartwright-Sturmfels$^*$. In particular, we showed that their multigraded generic initial ideals are radical and do not depend on the term order.  

In~\cite[Theorem~3.2 and Theorem~4.1]{CDG1} we described the monomial generators of the multigraded generic initial ideal of the ideal of maximal minors of a column graded matrix and of a row graded matrix respectively, the latter under the assumption that the ideal has maximal height. We recall these results in Sections~\ref{rowgrad} and~\ref{colgrad}.  In Theorem~\ref{ginrowgraded} we describe  a system of generators and the prime decomposition of the multigraded generic initial ideal of the ideal of maximal minors of any row graded matrix. In Theorem~\ref{lemac3}  we do the same for the ideal of 2-minors of a row graded matrix. In addition we give a closed formula for the multigraded Hilbert series of the ideals of $2$-minors.

\section{Preliminaries}\label{prelim}

Let $S$ be  a polynomial ring  over a field $K$, endowed with a standard  $\ZZ^v$-graded structure, i.e., the degree of every indeterminate is an element of the canonical basis $\{e_1,\dots,e_v\}$ of $\ZZ^v$. For $i=1,\dots,v$ let $u_i$ be the number of indeterminates of $S$ of degree $e_i$. We denote them by $x_{i1},\dots, x_{iu_i}$. We assume that $u_i>0$ for all $i$. 

The group $G=\GL_{u_1}(K)\times \cdots \times \GL_{u_v}(K)$ acts on $S$ as the  group of $\ZZ^v$-graded $K$-algebras automorphisms. Let $B=B_{u_1}(K)\times \cdots  \times B_{u_v}(K)$ be the {\em Borel subgroup} of $G$, consisting of the upper triangular invertible matrices. An ideal $I\subset S$ is {\em Borel fixed} if $g(I)=I$ for every $g\in B$. In analogy with the standard $\ZZ$-graded situation, the property of being Borel fixed can be characterized combinatorially. Indeed one has that  an ideal  $I$ of $S$  is Borel fixed with (respect to the given $\ZZ^v$-graded structure) if and only if  it satisfies the following conditions: 
\begin{itemize} 
\item[(1)] $I$ is generated by monomials,
\item[(2)]  For every monomial generator $m$ of  $I$ one has that $(x_{ik}/x_{ij})^d m\in I$ for every $i=1,\dots,v$, for every $1\leq k<j\leq u_i$ and every $0\leq d\leq c$ such that $\binom{c}{d}\neq 0$ in $K$ where $c$ is the exponent of $x_{ij}$ in $m$.
\end{itemize} 

 Given a term order $\tau$ and a $\ZZ^v$-graded homogeneous ideal $I$ of $S$, one can consider its {\em multigraded generic initial ideal} $\gin_{\tau}(I)$ defined as $\inid_\tau(g(I))$, where $g$ is a general element in $G$. 
Notice that we always assume that $$x_{ij}>x_{ik} \ \mbox{ if } 1\leq j<k\leq u_i.$$
As in the standard graded setting, multigraded generic initial ideals are Borel fixed and can be obtained as $\inid_{\tau}(b(I))$ for a general $b\in B$. 

In~\cite{CDG2}, we introduced the following two families of multigraded ideals. Let $T=K[x_{11},x_{21},\dots, x_{v1}]\subset S$ be endowed with the $\ZZ^v$-graded structure induced by that of $S$. Notice that a $\ZZ^v$-graded homogeneous ideal of $T$ is a monomial ideal of $T$.  Hence  a $\ZZ^v$-graded ideal of $S$ which is extended from $T$ is an ideal of $S$ which is generated by monomials in the variables $x_{11},x_{21},\dots, x_{v1}$. We denote by $\HS(M,y)$ the multigraded Hilbert series of a finitely generated $\ZZ^v$-graded $S$-module $M$. 

\begin{defn}
Let $I$ be a $\ZZ^v$-graded ideal of $S$. We say that $I$ is a {\em Cartwright-Sturmfels ideal} if there exists a radical Borel fixed ideal $J$ of $S$ such that $\HS(I,y)=\HS(J,y)$. We say that $I$ is a {\em Cartwright-Sturmfels$^*$ ideal} if there exists a $\ZZ^v$-graded ideal $J$ of $S$ extended from $T$ such that $\HS(I,y)=\HS(J,y)$. 
\end{defn}

In~\cite[Theorem~3.5]{CDG1} we have showed that if $I,J$ are Borel fixed ideals and $\HS(I,y)=\HS(J,y)$, then $I=J$ as soon as $I$ (or $J$) is radical. In particular this implies that 
the multigraded generic initial ideal of a Cartwright-Sturmfels ideal does not depend on the term order (but only on the total order given on the indeterminates). 
 In \cite[Proposition~1.9]{CDG2} we gave a characterization of Cartwright-Sturmfels$^*$  that implies that the multigraded generic initial ideal of a Cartwright-Sturmfels$^*$ ideal is independent of the term order as well. 
Therefore in the sequel the multigraded generic initial ideal of a Cartwright-Sturmfels ideal $I$ will be simply denoted by $\gin(I)$ and similarly for Cartwright-Sturmfels$^*$ ideals. 
Moreover we will sometimes call it generic initial ideal, as we will always deal with the multigraded version.

\section{Maximal minors: the row graded case}\label{rowgrad}

Given integers $m\leq n$, let $S=K[x_{ij} : 1\leq i\leq m, \ \ 1\leq j\leq n]$ with the  $\ZZ^m$-graded structure induced by the assignment  $\deg x_{ij}=e_i\in \ZZ^m$. 
For  $a\in  \NN^m$  let $P_a$ be the associated Borel fixed prime ideal, i.e., 
$$P_a=( x_{ij} :  1\leq i\leq m \mbox{ and } 1\leq j\leq a_i ).$$

Let $L=(\ell_{ij})$ be a row graded $m\times n$ matrix of linear forms, i.e.~the entries of $L$ are  homogeneous and $\deg \ell_{ij}=e_i\in \ZZ^m$. Equivalently, 
 $$\ell_{ij}=\sum_{k=1}^m  \lambda_{ijk} x_{ik}$$ where
$\lambda_{ijk}\in K$. 
Let $I_m(L)$ be the ideal of maximal minors of $L$. 
Under the assumption that $I_m(L)$ has the largest possible codimension   in~\cite[Sect.4]{CDG1} we proved that: 
\begin{thm}
\label{maxcd}
Assume that $I_m(L)$ has codimension $n-m+1$. Then:
\begin{itemize}
\item[(1)] $I_m(L)$ is Cartwright-Sturmfels.
\item[(2)] Its  generic initial ideal is  
$$\gin(I_m(L))=(x_{1a_1}\cdots x_{ma_m} : \sum_{i=1}^m a_i \leq n ).$$
\item[(3)] One has the following irredundant prime decomposition:
$$\gin(I_m(L))=\bigcap_{a\in C} P_a$$ 
where $C=\{  a\in \NN^m :   \sum_{i=1}^m  a_i=n-m+1\}$. 
\end{itemize} 
\end{thm}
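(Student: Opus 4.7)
The strategy is to exhibit a radical Borel fixed monomial ideal $J$ having the same multigraded Hilbert series as $I_m(L)$, and then invoke the uniqueness of radical Borel fixed ideals with given multigraded Hilbert series from~\cite[Theorem~3.5]{CDG1}. Since $I_m(L)$ is already known to be Cartwright-Sturmfels from the work recalled in Section~\ref{prelim}, its generic initial ideal $\gin(I_m(L))$ is itself radical and Borel fixed, so matching it with $J$ will settle (1) and (2) simultaneously; part (3) will then follow by a direct combinatorial computation on $J$.

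Set $J=(x_{1a_1}\cdots x_{ma_m}:\sum_i a_i\leq n)$. Its generators are squarefree, so $J$ is radical. To confirm that $J$ is Borel fixed, replace $x_{ia_i}$ by $x_{ib}$ with $b<a_i$ in a generator: the resulting monomial again lies in $J$ since the column index sum strictly decreases. For (3), the Borel fixed prime $P_a$ contains $J$ exactly when every tuple $(b_1,\dots,b_m)\in\NN^m$ with $b_i\geq 1$ and $\sum b_i\leq n$ satisfies $b_i\leq a_i$ for some $i$. The contrapositive rules out any tuple with $b_i\geq a_i+1$ for all $i$, which is impossible precisely when $\sum(a_i+1)>n$, equivalently $\sum a_i\geq n-m+1$. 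Minimality among such $a$'s forces $\sum a_i=n-m+1$, giving $J=\bigcap_{a\in C}P_a$.

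The main step is the Hilbert series identity $\HS(I_m(L),y)=\HS(J,y)$. Because $I_m(L)$ has maximal codimension $n-m+1$, the Eagon--Northcott complex of $L$ is a minimal multigraded free resolution of $S/I_m(L)$; its shifts involve only the multidegrees $e_1,\dots,e_m$ of the rows and the integers $m,n$, so $\HS(I_m(L),y)$ depends on $L$ only through these data. Equivalently, a flatness and semicontinuity argument shows that the locus of row graded matrices with codimension $n-m+1$ is open and that the Hilbert series is locally constant on it, so one may replace $L$ by any convenient representative. On the combinatorial side, $\HS(S/J,y)$ can be computed from the prime decomposition by inclusion-exclusion,
\[
\HS(S/J,y)=\sum_{\emptyset\neq T\subseteq C}(-1)^{|T|-1}\prod_{i=1}^m\frac{1}{(1-y_i)^{n-\max_{a\in T}a_i}},
\]
using that the primes $P_a$ are generated by initial variables row by row and that $P_a+P_{a'}$ corresponds to the coordinatewise maximum of $a$ and $a'$.

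The main obstacle is precisely this Hilbert series comparison. The Eagon--Northcott side is transparent but the inclusion-exclusion sum is unwieldy to simplify directly; the cleanest route is to verify the identity on one specific $L$, for example a matrix whose first $m$ columns form a generic square block and whose remaining $n-m$ columns are generic, where both the minors and the dominant monomial structure can be computed by hand, and then transport the conclusion to arbitrary $L$ of maximal codimension via the flat deformation described above. Once the Hilbert series match is established, $J$ is a radical Borel fixed ideal witnessing that $I_m(L)$ is Cartwright-Sturmfels, proving (1), and~\cite[Theorem~3.5]{CDG1} forces $\gin(I_m(L))=J$, giving (2).
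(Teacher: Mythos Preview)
The paper does not prove Theorem~\ref{maxcd}; it is quoted from \cite[Sect.~4]{CDG1} and then used as input for Theorem~\ref{ginrowgraded}. So there is no in-paper argument to compare against, and I can only evaluate your proposal on its own terms.

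Your overall strategy is the natural one: exhibit a radical Borel fixed ideal $J$ with $\HS(J,y)=\HS(I_m(L),y)$, then invoke \cite[Theorem~3.5]{CDG1} to force $\gin(I_m(L))=J$. Your checks that $J$ is squarefree, Borel fixed, and has prime decomposition $\bigcap_{a\in C}P_a$ are correct. One expository issue: your opening paragraph asserts that $I_m(L)$ is ``already known to be Cartwright--Sturmfels from the work recalled in Section~\ref{prelim},'' but Section~\ref{prelim} only gives definitions; nothing there establishes that any ideal of minors is CS. The logically clean version is the one you give in your final paragraph --- the Hilbert series identity itself is the witness for (1), after which rigidity yields (2) --- so the earlier sentence should simply be deleted rather than relied upon.

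The genuine gap is that the Hilbert series identity is asserted but not established. You correctly reduce, via the Eagon--Northcott resolution under the maximal codimension hypothesis, to a statement depending only on $m$ and $n$: it suffices to show $\HS(S/J,y)$ equals the Eagon--Northcott Hilbert series. But the inclusion--exclusion expression you write down for $\HS(S/J,y)$ is left unsimplified, and the proposed verification ``on one specific $L$'' is not carried out; choosing a matrix with a ``generic square block'' does not make the comparison with $J$ any more transparent than working with the generic matrix $X$ directly. To complete the argument you must actually do one of the following: (a) compute $\HS(S/J,y)$ explicitly and match it against the Eagon--Northcott numerator, (b) show that $S/J$ is resolved by a complex with the Eagon--Northcott shifts, or (c) exhibit $J$ as an initial ideal of some $I_m(L)$ of maximal codimension, which gives the Hilbert series equality for free. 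As written, the proposal is a correct outline with its central computation missing.
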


In \cite{CDG2} we generalized the first assertion by proving that $I_m(L)$ is Cartwright-Sturmfels independently of its codimension. Our goal here is identifying its  generic initial ideal and the corresponding prime decomposition. 
To this end we introduce the following invariants. For every subset $A$ of $[m]$ let $b_L(A)$ be the dimension of the $K$-vector subspace of $\oplus_{i\in A} S_{e_i}$ generated by the columns of the matrix $L_A=(\ell_{ij})$ with $i\in A$ and $j\in [n]$. Then we have: 

\begin{thm}\label{ginrowgraded}
 With the notation above one has:
\begin{itemize} 
\item[(1)] $I_m(L)$ is Cartwright-Sturmfels.
\item[(2)] The generic initial ideal of $I_m(L)$ is  
$$\gin(I_m(L))=(x_{1a_1}\cdots x_{ma_m} : \sum_{i\in A} a_i  \leq b_L(A)  \mbox{ for every } A\subseteq [m]).$$
\item[(3)] Furthermore
$$\gin(I_m(L))=\bigcap_{a\in C} P_a$$
where  $C$ is the set of the elements $a=(a_1,\dots,a_m)\in\NN^m$ such that  for some  $A\subseteq [m]$  one has $a_i=0$ for  $i\in [m]\setminus A$ and 
$$\sum_{i\in A} a_i= b_L(A)-|A|+1.$$
 \end{itemize} 
\end{thm}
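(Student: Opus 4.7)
Plan: Part (1) is established in~\cite{CDG2}. We focus on (2) and (3). Let $J$ denote the monomial ideal described in (2) and $J'=\bigcap_{a\in C}P_a$ the ideal in (3). The plan is to show $J=J'$ combinatorially (which implies $J$ is radical and Borel-fixed), and then identify $\gin(I_m(L))$ with this common ideal using the Cartwright-Sturmfels machinery.

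For the combinatorial equality, observe that both $J$ and $J'$ are Borel-fixed ($J$ by monotonicity of the defining conditions, $J'$ as an intersection of Borel-fixed primes). On multidegree $(1,\dots,1)$ monomials, $x_{1b_1}\cdots x_{mb_m}\notin P_a$ for $a\in C$ supported on $A\subseteq[m]$ with $\sum_{i\in A}a_i=b_L(A)-|A|+1$ iff $b_i>a_i$ for all $i$, which (using $a_i=0$ outside $A$) reduces to $\sum_{i\in A}b_i\ge b_L(A)+1$, precisely the failure of the condition in (2). Moreover, $J'$ is generated in multidegree $(1,\dots,1)$: any minimal squarefree generator contains at most one variable from each row (else the variable of larger index is redundant in every $P_a$ by the Borel structure) and at least one variable from each row $i$ with $b_L(\{i\})\ge 1$ (since the prime $P_{(0,\dots,b_L(\{i\}),\dots,0)}\in C$ is supported on $\{i\}$). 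Hence $J=J'$, which is radical and Borel-fixed.

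For the identification $\gin(I_m(L))=J$, invoke (1) and~\cite[Theorem~3.5]{CDG1}: $\gin(I_m(L))$ is the unique radical Borel-fixed ideal with $\HS(\gin(I_m(L)),y)=\HS(I_m(L),y)$, so the task reduces to showing $\HS(I_m(L),y)=\HS(J,y)$. A natural route is first to establish the containment $\gin(I_m(L))\subseteq J$ by inspecting initial monomials of maximal minors of $g\cdot L$ for generic $g\in G$ (using that the invariants $b_{g\cdot L}(A)=b_L(A)$ are $G$-invariant, so the initial monomial $x_{1b_1}\cdots x_{mb_m}$ must satisfy $\sum_{i\in A}b_i\le b_L(A)$); then to match Hilbert series, either by reducing to the maximal-codimension case (Theorem~\ref{maxcd}) via column operations and a deformation of $L$ to a canonical matrix with the same invariants, or by a direct inclusion-exclusion computation on the prime decomposition (3). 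The prime decomposition itself is then immediate: once $\gin(I_m(L))=J=J'$, the irredundancy corresponds to the minimality (in the componentwise order on $\NN^m$) of each $a\in C$, equivalent to the tightness of some constraint $\sum_{i\in A}a_i=b_L(A)-|A|+1$.

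The main obstacle is the Hilbert series identity $\HS(I_m(L),y)=\HS(J,y)$. The right-hand side depends on $L$ only through the polymatroid rank function $A\mapsto b_L(A)$ (monotone and submodular), so the essential content is to show that the same is true of the left-hand side. This may be approached via a deformation argument reducing to the maximal-codimension case of Theorem~\ref{maxcd}, combined with inclusion-exclusion on the decomposition (3), or via a Gr\"obner basis/resolution computation building on the universal Gr\"obner bases of $I_m(L)$ established in~\cite{CDG1,CDG2}.
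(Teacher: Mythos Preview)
Your combinatorial verification that $J=J'$ (and hence that $J$ is radical Borel-fixed) is correct and useful, and your overall strategy---reduce to the uniqueness statement for radical Borel-fixed ideals with a given Hilbert series---is sound in principle. But the proof has a genuine gap precisely where you acknowledge it: the equality $\HS(I_m(L),y)=\HS(J,y)$ is never established. The two suggested routes (``deformation to a canonical matrix with the same invariants'' and ``inclusion-exclusion on the prime decomposition'') are not arguments but programs, and neither is easy to carry out. In particular, showing that $\HS(I_m(L),y)$ depends only on the rank function $A\mapsto b_L(A)$ is essentially the content of the theorem. Your containment step $\gin(I_m(L))\subseteq J$ is also underjustified: the maximal minors do not obviously form a Gr\"obner basis for a generic coordinate change, so ``inspecting initial monomials of the minors'' does not a priori control all of $\gin(I_m(L))$; and even for a single minor the bound $\sum_{i\in A} b_i\le b_L(A)$ on its leading monomial needs an argument.

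The paper avoids the Hilbert series computation altogether and proves both inclusions directly. For $\gin(I_m(L))\subseteq U$ it uses the ideal containment $I_m(L)\subseteq I_{|A|}(L_A)$, passes to a matrix $L_A'$ of size $|A|\times b_L(A)$ by column operations, and then invokes a quotient lemma (for Cartwright--Sturmfels ideals, $\gin(I')S\subseteq\gin(I)$ when $I'$ is the image of $I$ modulo a linear form) together with Theorem~\ref{maxcd} to bound $\gin(I_{|A|}(L_A'))$ by $\gin$ of a generic matrix. For the reverse inclusion $U\subseteq\gin(I_m(L))$ it argues monomial by monomial: given $M=x_{1a_1}\cdots x_{ma_m}$ satisfying the constraints, one inducts on $N=\sum_i (b_L(\{i\})-a_i)$. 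The base case $N=0$ is handled by putting $L$ in block-diagonal form; the inductive step cuts by a generic linear form $h$ in one row (again via the quotient lemma), which decreases one $b_L(\{i\})$ by $1$, and Grassmann's formula is used to check that the constraints $\sum_{i\in A} a_i\le b_{L'}(A)$ persist for the quotient matrix $L'$. The decomposition (3) then falls out from the equality $\gin(I_m(L))=\bigcap_{A}\gin(I_{|A|}(Y_A))$ established along the way. The missing ingredient in your approach is precisely this quotient lemma and the induction it enables; without it you are left having to compute a Hilbert series that the paper never touches.
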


In the proof of the theorem we need the following lemma.
Let $I\subset S$ be a $\ZZ^m$-graded ideal, let $h\in S_{e_k}$ such that the coefficient of $x_{kn_k}$ in $h$ is non-zero. We may identify $S/(h)$ with the polynomial subring $S'$ of $S$  generated by all the variables of $S$ with the exception of $x_{kn_k}$. Then $I+(h)/(h)$ is identified with an ideal $I'$ of $S'$. 

\begin{lemma}\label{quotient}
Let $I$ be a Cartwright-Sturmfels ideal. 
With the notation and identification above one has that  $$\gin(I')S\subseteq\gin(I)$$
for all $h$.
\end{lemma}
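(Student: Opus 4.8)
The plan is to pass to a generic coordinate change and compare initial ideals degree by degree, using the fact that Cartwright--Sturmfels ideals behave well under restriction because their multigraded Hilbert function is the smallest possible. First I would fix a term order $\tau$ on $S$ that is compatible with the inclusion $S'\subset S$, in the sense that $x_{kn_k}$ is the smallest variable of degree $e_k$; then for a $\ZZ^m$-graded ideal $J$ of $S'$, the initial ideal $\inid_\tau(J S)$ equals $\inid_\tau(J) S$. Now let $g\in G$ be a general element; after replacing $I$ by $g(I)$ and $h$ by $g(h)$ we may assume that $\gin(I)=\inid_\tau(I)$ and that $h$ has been put in sufficiently general position, so that the hypothesis that the coefficient of $x_{kn_k}$ in $h$ is nonzero still holds and, moreover, a general element $g'$ of the subgroup $G'=\GL_{u_1}\times\cdots\times\GL_{u_k-1}(\text{acting fixing }x_{kn_k})\times\cdots$ of $G$ that stabilizes the line $Kh$ computes $\gin(I')$ inside $S'$. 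The key observation is that under this identification $S/(h)\cong S'$, the image $I'$ of $I+(h)/(h)$ satisfies $\HS(I',y)\le \HS(\gin(I) S /(\, \overline{h}\,),y)$ coefficientwise for the image $\overline h$ of a generic linear form of degree $e_k$, because quotienting the graded algebra $S/I$ by a single form can only drop the Hilbert function by a bounded amount and $\gin(I)$ records exactly the generic such drop.

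Next I would make this precise as follows. Since $I$ is Cartwright--Sturmfels, $J:=\gin(I)$ is a radical Borel-fixed ideal with $\HS(I,y)=\HS(J,y)$. Pick a variable $x_{kn_k}$; because $J$ is Borel fixed and $x_{kn_k}$ is the last variable of degree $e_k$, it is a nonzerodivisor-like ``generic'' choice in the sense that the minimal primes of $J$ are the $P_a$ not containing $x_{kn_k}$ together with those containing it, and restricting $J$ modulo $x_{kn_k}$ gives a Borel-fixed ideal $J'$ of $S'$ with $J'=J S'$ (the image of $J$), still radical. On the $I$ side, for a generic linear form $h\in S_{e_k}$ the short exact sequence $0\to (S/I)(-e_k)\xrightarrow{\,h\,} S/I\to S/(I+(h))\to 0$ shows $\HS(S/(I+(h)),y)\ge \HS(S/I,y)(1-y_k)$ coefficientwise, with equality iff $h$ is a nonzerodivisor on $S/I$ in degree-shift; in general $\HS(S/I',y)=\HS(S/(I+(h)),y)$ and one gets $\HS(I',y)\le \HS(J\,{\rm mod}\, x_{kn_k},y)=\HS(J S',y)$. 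The point is that the generic hyperplane section of $I$ has Hilbert function at most that of the corresponding coordinate restriction of $\gin(I)$, because passing to $\gin$ only makes quotients larger, not smaller.

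Now I would invoke \cite[Theorem~3.5]{CDG1} in the form stated in the excerpt: $\gin(I')$ is a radical Borel-fixed ideal, $J S'$ is a radical Borel-fixed ideal, and $\HS(\gin(I'),y)=\HS(I',y)\le \HS(J S',y)$. Coefficientwise comparison of Hilbert series of Borel-fixed ideals, the larger of which ($J S'$) is radical, forces the containment $\gin(I')\supseteq J S'$ — indeed for Borel-fixed monomial ideals, $\HS(A,y)\le \HS(B,y)$ with $B$ radical implies $A\supseteq B$, by the same argument used to prove \cite[Theorem~3.5]{CDG1} (one compares monomial generators using that a radical Borel-fixed ideal is determined by which squarefree monomials in the distinguished variables $x_{i1}$ it contains). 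Pulling back to $S$ via $\gin(I') S\supseteq J S' \cdot S = \gin(I) S' \cdot S$; but one must be a little careful here: $J=\gin(I)$ need not be extended from $S'$, so what one really gets is $\gin(I')S \supseteq (J \cap S')S$ is not quite what is claimed. The correct statement is that $\gin(I')S\subseteq \gin(I)$: here the variable $x_{kn_k}$ appears in $\gin(I)$ but not in $\gin(I')S$, and Borel-fixedness of $\gin(I)$ together with $x_{kn_k}$ being the smallest variable of degree $e_k$ forces $\gin(I) \supseteq (J\,{\rm mod}\,x_{kn_k})S = \gin(I')S$ once we know $\gin(I')\supseteq J\,{\rm mod}\, x_{kn_k}$. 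So the inclusion to establish is $\gin(I') \supseteq \gin(I)\,{\rm mod}\,x_{kn_k}$, after which extending back to $S$ and using that no monomial generator of the right-hand side involves $x_{kn_k}$ yields $\gin(I')S\subseteq\gin(I)$.

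The main obstacle I anticipate is the bookkeeping around which generic coordinate change computes $\gin(I')$: one needs that, after conjugating $I$ by a general $g\in G$, the specific hyperplane $\{h=0\}$ becomes generic \emph{relative to the stabilizer subgroup}, so that restricting $g(I)$ to this hyperplane and taking $\gin$ in $S'$ genuinely produces $\gin(I')$ rather than some larger initial ideal; this requires checking that the composite of ``general $g\in G$'' with ``general $g'$ in the $S'$-automorphism group'' is still general enough, a standard but slightly delicate density argument. The Hilbert-series comparison and the final monomial-ideal containment are then routine, the former from the hyperplane-section exact sequence and the latter from the classification of radical Borel-fixed ideals used in \cite[Theorem~3.5]{CDG1}.
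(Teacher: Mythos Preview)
Your argument has a genuine gap, and in fact a sign error that makes the final deduction invalid. In the last paragraph you say the inclusion to establish is $\gin(I')\supseteq \gin(I)\bmod x_{kn_k}$, and that extending to $S$ then yields $\gin(I')S\subseteq\gin(I)$. But from $\gin(I')\supseteq J\bmod x_{kn_k}$ one only gets $\gin(I')S\supseteq (J\bmod x_{kn_k})S$, and the latter is \emph{contained in} $J$; no relation between $\gin(I')S$ and $J$ follows. What is actually needed is the opposite inclusion $\gin(I')\subseteq J\bmod x_{kn_k}$ in $S'$. Your route to the (wrong-direction) inclusion also rests on the claim that for Borel-fixed monomial ideals, $\HS(A,y)\le\HS(B,y)$ with $B$ radical forces $A\supseteq B$; this is false already when $A\subsetneq B$, and it is not what \cite[Theorem~3.5]{CDG1} says or proves. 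Finally, the inequality $\HS(I',y)\le\HS(J\bmod x_{kn_k},y)$ is asserted but not justified: the slogan ``passing to gin only makes quotients larger'' is incorrect, since gin preserves the multigraded Hilbert series exactly. One would need to compare $\HS\big(S/(I{:}h),y\big)$ with $\HS\big(S/(J{:}x_{kn_k}),y\big)$, and you do not do this.

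The paper's proof avoids all of this by a short reduction you did not find. Write the projection $S\to S'=S/(h)$ as $\phi_h=\phi_{x_{kn_k}}\circ\phi^h$, where $\phi^h$ is the multigraded \emph{automorphism} of $S$ sending $x_{kn_k}\mapsto -\lambda^{-1}h$ and fixing the other variables. Since automorphisms do not change gin, one may replace $I$ by $\phi^h(I)$ and hence assume $h=x_{kn_k}$. Now choose a revlex order with $x_{kn_k}$ smallest; the standard elimination property gives $\inid(I')=\inid(I)\cap S'$, hence $\inid(I')S\subseteq\inid(I)$. Applying gin (which preserves inclusions) yields $\gin(\inid(I')S)\subseteq\gin(\inid(I))$. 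The Cartwright--Sturmfels hypothesis is used exactly here: for a CS ideal every initial ideal has the same gin as the ideal itself, so $\gin(\inid(I))=\gin(I)$ and $\gin(\inid(I')S)=\gin(I')S$ (the latter via \cite[Theorem~1.16]{CDG2}, which also gives that $I'$ is CS). This gives the desired inclusion directly, with no Hilbert-series comparison and no extension of the rigidity theorem.
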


\begin{proof}  
In \cite[Theorem~1.16]{CDG2} we have already proved that $I'$ is a Cartwright-Sturmfels ideal of $S'$ and  that $\gin(I')S=\gin(I'S)$.  By construction  $I'$ is the image of $I$ under the $K$-algebra map $\phi_h:S\to S'$ which sends $x_{ij}$ to itself if $(i,j)\neq (k,n_k)$ and $x_{kn_k}$ to $-\lambda^{-1}\sum_{j=1}^{n_k-1} \mu_j x_{kj}$ where  $h=\lambda x_{kn_k}+\sum_{j=1}^{n_k-1} \mu_j x_{kj}$. Denote by $\phi^h:S\to S$  the $K$-algebra  automorphism of $S$ which sends $x_{ij}$ to itself if $(i,j)\neq (k,n_k)$ and $x_{kn_k}$ to $-\lambda^{-1}h$. Since by construction $\phi_h=\phi_{x_{kn_k}}\phi^h$ and since $I$ and $\phi^h(I)$ have the same gin, we may assume $h=x_{kn_k}$. Using a revlex order with $x_{kn_k}$ as smallest variable one has $\inid(I')=\inid(I)\cap S'$, hence $\inid(I')S\subseteq \inid(I)$. Computing the  generic initial ideal on both sides one obtains $\gin(\inid(I')S)\subseteq \gin(\inid(I))$. Finally, since $I$ and $I'$ are Cartwright-Sturmfels ideals, then $\gin(\inid(I))=\gin(I)$ and $\gin(\inid(I'))=\gin(I')$. Hence we conclude that $\gin(I')S\subseteq\gin(I)$, as desired. 
\end{proof} 

\begin{rmk} Without the assumption that $I$ is a Cartwright-Sturmfels ideal the statement in Lemma \ref{quotient} does not hold. For example if $I=(x_1x_2, x_1x_3, x_1^2+x_4^2)$ in $K[x_1,x_2,x_3,x_4]$ with the standard $\ZZ$-grading and $h=x_4$, then $I'=(x_1x_2, x_1x_3, x_1^2)$ and $\gin(I')=I'$ since $I'$ is Borel fixed, while $\gin(I)=(x_1^2, x_1x_2, x_2^2, x_1x_3^2)$. Here the gins are computed with respect to the revlex order. 
The argument given above breaks down when we state that $\gin(\inid(I))=\gin(I)$: this is true for Cartwright-Sturmfels ideals and false in general. 
\end{rmk} 

\begin{proof}[Proof of Theorem \ref{ginrowgraded}] 
As said above, (1) has been proved already in \cite{CDG2}.  

As for (2), let 
$$U=(x_{1a_1}\cdots x_{ma_m} : \sum_{i\in A} a_i  \leq b_L(A)  \mbox{ for every } A\subseteq [m]).$$
We start by proving the inclusion $\gin(I_m(L))\subseteq U$. 
For any $A=\{i_1,\dots,i_v\} \subseteq [m]$ one has $I_m(L)\subseteq I_v(L_A)$. Up to column operations, $L_A$ is  equivalent to  a row graded matrix $L_A'$ of size $v\times b_L(A)$. Hence $\gin(I_m(L))\subseteq\gin(I_v(L'_A))$. Since $L'_A$ can be seen as a multigraded linear section of a matrix $Y_A$ of variables of the same size, applying Lemma~\ref{quotient} and Theorem~\ref{maxcd} we have
$$\gin(I_m(L))\subseteq \gin(I_v(L'_A)) \subseteq \gin(I_v(Y_A))=(x_{i_1a_1}\cdots x_{i_va_v} : a_1+\dots +a_v\leq b_L(A)).$$
Therefore, 
$$\gin(I_m(L))\subseteq\bigcap_{A\subseteq [m]} \gin(I_{|A|} (L_A))\subseteq \bigcap_{A\subseteq [m]} \gin(I_{|A|}(Y_A))$$
and 
$$\bigcap_{A\subseteq [m]} \gin(I_{|A|}(Y_A))  = \bigcap_{A\subseteq [m], A=\{i_1,\dots, i_v\}}  (  x_{i_1a_1}\cdots x_{i_va_v} : a_1+\dots +a_v\leq b_L(A))=U$$
where the last equality  is a straightforward verification. 

In order to prove the reverse inclusion $\gin(I_m(L))\supseteq U$, let $M=x_{1a_1}\cdots x_{ma_m}$ be a monomial such that $ \sum_{i\in A} a_i  \leq b_L(A)$ for every $A\subseteq [m]$. We will show that $M\in\gin(I_m(L))$. 
Denote by $b_i$ the number $b_L(\{i\})$. By assumption $a_i\leq b_i$ and, up to a multigraded linear transformation 
(that does not affect the gin) we may assume that only the first $b_i$ variables of multidegree $e_i$ are actually used in $L$. 
%We can assume without loss of generality that $a_i\leq b_i$ for all $i$ since, up to a change of variables, $b_i$ is the number of variables of multidegree $e_i$ in the smallest polynomial subring $S'$ of $S$ that contains the entries of $L$. Notice moreover that if $I=JS$ for some $J\subset S'$, then $\gin(J)S=\gin(I)$ as shown in~\cite[Proposition~1.7]{CDG1}. 
We argue by induction on $N=\sum_{i=1}^m (b_i-a_i)$. Note that $N\geq 0$ by assumption. 

If $N=0$, i.e., $a_i=b_i$ for all $i$, then, after suitable column operations, the matrix $L$ can be brought in the form 
$$\left(\begin{array}{cccccccccccc}
x_{11}  & x_{12} & \ldots & x_{1a_1} & 0 & \ldots & \ldots & \ldots & \ldots & \ldots & \ldots & 0 \\
0 & \ldots & \ldots & 0 & x_{21} & x_{22} & \ldots & x_{2a_2} & 0 & \ldots & \ldots & 0 \\
\vdots & & & & & & & & & & & \vdots \\
0 & \ldots & \ldots & \ldots & \ldots & \ldots & \ldots & 0 & x_{m1} & x_{m2} & \ldots & x_{ma_m}
\end{array}\right).$$
Therefore $I_m(L)=\prod_{i=1}^m(x_{i1}, x_{i2},\ldots, x_{ia_i})$ and $\gin(I_m(L))=I_m(L)$ because $I_m(L)$ is Borel fixed. Hence $M\in I_m(L)=\gin(I_m(L))$ as required.  

Assume now that $N>0$, that is $a_j<b_j$ for some $j$ in $[m]$, say $a_1<b_1$. Let $h$ be a generic linear combination of $\ell_{1j}$ with $j=1,\dots,n$.   By Lemma~\ref{quotient} it suffices to show that $M\in \gin(I_m(L'))$, where $L'$ is the image of $L$ in $S/(h)$. Set $b_i'=b_{L'}(\{i\}$ and notice that, by construction, $b_i'=b_i$ for $i>1$ and $b_1'=b_1-1$. Then we may conclude by induction provided that we show that
\begin{equation}\label{quot_inequalities}
\sum_{i\in A} a_i  \leq b_{L'}(A),\end{equation} 
for all $A\subseteq [m]$ and for $h$ generic.  
As we will see, inequality (\ref{quot_inequalities}) follows essentially from Grassmann's formula. 
To this end let $V_i$ be the kernel of the $K$-linear map 
$f_i:K^n\to S_{e_i}$ defined by $f((\lambda_1,\dots , \lambda_n))=\sum_{j=1}^n \lambda_j L_{ij}$ and let $V_i'$ be the corresponding objects associated to $L'$.  
By construction, $V_i'=V_i$ for $i>1$,  $V_1'=V_1+\langle h'\rangle$ with $h'$ generic. Furthermore
$ b_L(A)=n-\dim_K \cap_{i\in A} V_i$ and $b_L'(A)=n-\dim_K \cap_{i\in A} V_i'$. If $1\not\in A$ then $ b_L(A)=b_L'(A)$ and (\ref{quot_inequalities}) holds by assumption. 
If $1\in A$ let $W=\cap_{i\in A, i\neq 1} V_i$. Then, by Grassmann's formula we have: 
$$b_L(A)=n-\dim W-\dim V_1+\dim (V_1+W)$$ 
and 
$$b_L'(A)=n-\dim W-\dim (V_1+\langle h'\rangle)+\dim (V_1+ \langle h'\rangle+W)$$ 
Now, if $V_1+W$ is strictly contained in $K^n$, then $\dim (V_1+\langle h'\rangle)=\dim V_1+1$ and $\dim (V_1+ h'\rangle+W)=\dim (V_1+W)+1$ for a generic $h'$. Hence it follows that $b_L(A)=b_L'(A)$ and (\ref{quot_inequalities}) holds by assumption. 

Finally if $V_1+W=K^n$ then we have
$$b_L'(A)= n-\dim W-\dim (V_1+\langle h' \rangle)+n=b_L(A\setminus \{1\})+b_1-1$$ 
and then 
$$\sum_{i\in A} a_i=a_1+\sum_{i\in A, i\neq 1} a_i<b_1+b_L(A\setminus \{1\})=b_{L'}(A)+1$$
that is, 
$$\sum_{i\in A} a_i\leq b_{L'}(A)$$
as desired. 

We now prove (3). In the proof of (2) we have shown that 
$$\gin(I_m(L))=\bigcap_{A\subseteq [m]} \gin(I_{|A|}(Y_A)).$$
Moreover, by Theorem~\ref{maxcd} $$\gin(I_{|A|}(Y_A))=\bigcap_{a\in C_A} P_a$$
where $C_A$ is the set of the $a\in \ZZ^m$ with $a_i=0$ for $i\in [m]\setminus A$ and $\sum_{i\in A} a_i=b_L(A)-|A|+1$. Combining the two equalities we get the desired prime decomposition. 
\end{proof}

\section{Maximal minors: the column graded case}\label{colgrad}

For completeness, in this section we recall the results proved in \cite{CDG1} concerning ideals of maximal minors in the column graded case.  
Given integers $m\leq n$, let $S=K[x_{ij} : 1\leq i\leq m, \ 1\leq j\leq n]$ graded by $\deg x_{ij}=e_j\in \ZZ^n$. 
For  $a\in \NN^n$  with $a_i\leq m$ let $P_a$ be the associated Borel fixed prime ideal, i.e., 
$$P_a=( x_{ij} :  1\leq i\leq a_i  \mbox{ and } 1\leq j\leq n ).$$
Let $L=(\ell_{ij})$ be a column graded $m\times n$ matrix of linear forms, i.e.  the  $\ell_{ij}$'s are homogeneous of  degree $e_j$. Equivalently, 
$$\ell_{ij}=\sum_{k=1}^m  \lambda_{ijk} x_{kj}$$ where
$\lambda_{ijk}\in K$.  

Let $I_m(L)$ be the ideal of maximal minors of $L$. In \cite[Section 3]{CDG1} we have proved  the following statement. We rephrase the result using the terminology introduced in \cite{CDG2}.

\begin{thm}
\label{superBSZ2} 
With the notation above one has:
\begin{itemize} 
\item[(1)] $I_m(L)$ is a Cartwright-Sturmfels ideal as well as a Cartwright-Sturmfels$^*$ ideal. 
\item[(2)] The generic initial ideal of $I_m(L)$ is
$$\gin(I_m(L))=(x_{1a_1}\cdots x_{1a_m} : [a_1,\dots, a_m]_L\neq 0 )$$ where $[a_1,\dots, a_m]_L$ denotes the $m$-minor of $L$ corresponding to columns indices $a_1,\dots,a_m$.
\item[(3)] The prime decomposition of $\gin(I_m(L))$ is 
$$\gin(I_m(L))=\bigcap_{c\in C} P_c$$
where $C$ is the set of the elements $c=(c_1,\dots,c_n)\in \{0,1\}^n$ whose support $\Supp(c)=\{ i \in [n] : c_i=1\}$ is minimal with respect to the property that $\Supp(c)\cap\{a_1,\ldots,a_m\}\neq \emptyset$ for every $(a_1,\ldots,a_m)$ such that $[a_1,\dots, a_m]_L\neq 0$. In other words, the minimal associated primes of $I_m(L)$ are exactly the ideals $(x_{1b_1},\ldots,x_{1b_k})$ where $\{b_1,\ldots,b_k\}$ is a minimal vertex cover of the simplicial complex whose facets are $\{\{a_1,\ldots,a_m\}\mid [a_1,\dots, a_m]_L\neq 0\}$. 
\end{itemize} 
\end{thm}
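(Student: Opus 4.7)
My plan is to compute $\gin(I_m(L)) = \inid(I_m(b(L)))$ for a generic $b \in B$ by combining the Bernstein--Sturmfels--Zelevinsky universal Gr\"obner basis with a direct leading-term expansion, and then read off (1) and (3) as immediate consequences of the form of the answer.

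The group $G$ acts on $S$ by $K$-algebra automorphisms that preserve each column of variables, so $[a_1,\ldots,a_m]_{g(L)} = g([a_1,\ldots,a_m]_L)$ for every $g \in G$, and $G$ preserves the non-vanishing pattern of maximal minors of $L$. By \cite{BZ,SZ} the maximal minors of a matrix of indeterminates form a universal Gr\"obner basis; the same should hold for $I_m(b(L))$ with $b$ generic, since the entries of $b(L)$ are as generic as allowed by the prescribed minor-vanishing pattern. With the order $x_{ij} > x_{kj}$ for $i<k$, each entry of $b(L)$ has the form
$$b(\ell_{i,a_j}) \;=\; c_{ij}\,x_{1,a_j} \;+\; (\text{linear combination of }x_{k,a_j}\text{ for }k>1)$$
for a scalar $c_{ij}$ depending on $b$ and on $L$, and a direct determinant expansion gives that the coefficient of $\prod_{j}x_{1,a_j}$ in $[a_1,\ldots,a_m]_{b(L)}$ equals $\det(c_{ij})$, while every other monomial in the expansion is strictly smaller in the order. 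A short linear-algebra computation shows that for generic $b$ one has $\det(c_{ij}) \neq 0$ exactly when $[a_1,\ldots,a_m]_L \neq 0$. Hence $\inid(I_m(b(L))) = J := (x_{1,a_1}\cdots x_{1,a_m} : [a_1,\ldots,a_m]_L \neq 0)$; since $J$ involves only top-row variables, it is automatically Borel fixed, and so $\gin(I_m(L)) = J$, proving (2).

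Granted (2), parts (1) and (3) are immediate. Since $J$ is squarefree, $I_m(L)$ is CS; since $J$ is generated by monomials in $x_{11}, \ldots, x_{1n}$ only, $J$ is extended from $T$ and so $I_m(L)$ is CS*. For (3), standard Stanley--Reisner theory applied to $J$ as a squarefree monomial ideal in the top-row variables shows that its minimal primes are generated by the minimal vertex covers of the simplicial complex whose facets are the non-vanishing column tuples $\{a_1,\ldots,a_m\}$, matching the description in the theorem.

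The main obstacle is establishing that the BSZ universal-Gr\"obner-basis property persists for $I_m(b(L))$ when $L$ has a nontrivial minor-vanishing pattern; this likely requires a degeneration argument in the spirit of \cite{CDG1}, controlling the syzygies among the nonvanishing minors. The generic non-vanishing of $\det(c_{ij})$ then reduces to showing that, as a polynomial in the generic entries of $b$, $\det(c_{ij})$ is identically zero only when $b(L_A)$ is rank-deficient for every $b$, which in turn is equivalent to $[a_1,\ldots,a_m]_L=0$.
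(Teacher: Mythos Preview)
The paper does not prove this theorem here; it only recalls the statement from \cite[Section~3]{CDG1} and rephrases it in the Cartwright--Sturmfels language of \cite{CDG2}. So there is no in-paper argument to compare against, and your task is really to reconstruct (or replace) the proof from \cite{CDG1}.

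Your leading-term computation is correct and cleanly stated. For generic $b\in B$, the coefficient of $x_{1a_1}\cdots x_{1a_m}$ in $[a_1,\dots,a_m]_{b(L)}$ is indeed $\det(c_{ij})$, and this determinant is nothing but the evaluation of the polynomial $[a_1,\dots,a_m]_L$ at the point $x_{k,a_j}=(b_{a_j})_{1k}$; hence it is generically nonzero precisely when the minor itself is nonzero. This gives the inclusion $J\subseteq\inid(I_m(b(L)))=\gin(I_m(L))$. Your derivations of (1) and (3) from (2) are also fine: $J$ is squarefree and extended from $T$, and the Stanley--Reisner description of its minimal primes is exactly the vertex-cover statement.

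The genuine gap is the reverse inclusion $\gin(I_m(L))\subseteq J$. You assert that the nonzero maximal minors of $b(L)$ form a Gr\"obner basis ``since the entries of $b(L)$ are as generic as allowed by the prescribed minor-vanishing pattern,'' but this is precisely the nontrivial content of the theorem and is not a consequence of \cite{BZ,SZ}, which treat only a matrix of indeterminates. When some maximal minors of $L$ vanish, new $K$-linear syzygies among the surviving minors could in principle force the initial ideal to be strictly larger than $J$; ruling this out is what the argument in \cite{CDG1} actually does (via a Hilbert-series comparison showing $\HS(S/J,y)=\HS(S/I_m(L),y)$, not via an appeal to BSZ). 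The sentence ``since $J$ involves only top-row variables, it is automatically Borel fixed, and so $\gin(I_m(L))=J$'' is a non sequitur: Borel-fixedness of $J$ does not upgrade the inclusion $J\subseteq\gin(I_m(L))$ to an equality without such a Hilbert-series argument or a bona fide Gr\"obner-basis verification. You yourself flag this as ``the main obstacle'' in the last paragraph, so as written the proposal is an outline with the decisive step left open rather than a proof.
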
 

\begin{rmk}
If all the minors of $L$ are nonzero, then $$\gin(I_m(L))=(x_{1a_1}\cdots x_{1a_m} : 1\leq a_1<\ldots<a_m\leq n)=\bigcap_{c\in C} P_c$$ where $C$ is the set of all subsets of $[n]$ of cardinality $n-m+1$. Notice moreover that $$\gin(I_m(L))=\gin(I_m(X_{m\times n}))=\gin(I_{n-m+1}(X_{(n-m+1)\times n}))^*$$ where $X_{u\times v}$ denotes a matrix of indeterminates of size $u\times v$ and $*$ denotes the Alexander dual.
\end{rmk}

\section{A family of Hilbert series of multigraded algebras }

In this section we discuss a combinatorially defined family of formal power series. We will show that every such power series is the  Hilbert series of a multigraded $K$-algebra defined by a radical Borel fixed ideal. In Section~\ref{2min} we will apply these results to ideals of $2$-minors of multigraded matrices.  

Let $n,m\in \NN_{>0}$ and let $$\Phi:2^{[m]}\to \{0,\dots,n\}$$ be a map such that: 
\begin{itemize}
\item[(1)] $\Phi(\emptyset)=n$, 
\item[(2)] $\Phi(A)\geq \Phi(B)$ whenever $A\subseteq B$. 
\end{itemize} 
Consider the formal power series
$$H_\Phi(y_1,\dots,y_m)= \sum_{A\subseteq [m]} \sum_{a\in \NN^m \atop  \Supp(a)=A} \binom{\Phi(A)-1+|a|}{\Phi(A)-1} y^a \in \QQ[|y_1,\dots,y_m|]$$
where for $a=(a_1,\dots,a_m)\in \NN^m$ one sets 
$$\Supp(a)=\{ i\in [m] : a_i>0\} \quad \mbox{ and } \quad  |a|=a_1+\dots+a_m.$$

We now prove that the series $H_\Phi(y_1,\dots,y_m)$ is the multigraded Hilbert series of $S/I$, where $S$ is a multigraded polynomial ring and $I$ is a radical Borel fixed ideal. 
Let $d_i=\Phi(\{i\})$ and let $S=K[x_{ij} : i=1,\dots, m \mbox{ and } 1\leq j\leq d_i]$ endowed with the $\ZZ^m$-multigraded structure induced by 
$\deg x_{ij}=e_i\in \ZZ^m$. For every $\emptyset\neq A=\{a_1,\dots, a_t\} \subseteq  [m]$ let
$$I_A=( \prod_{i\in A} x_{ib_i}  :  1\leq b_i\leq d_{i} \mbox{ for every } i\in A   \mbox{ and }  \sum_{i\in A} (d_i-b_i) \geq  \Phi(A) ).$$

\begin{lemma}\label{lemac2}
The ideal $I=\sum_{A\subseteq  [m]} I_A$ is radical Borel fixed and $S/I$ has multigraded Hilbert series equal to $H_\Phi(y_1,\dots,y_m)$. 
\end{lemma}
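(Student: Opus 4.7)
The plan is to handle the three claims in sequence: squarefreeness will yield radicality, a direct check on each $I_A$ will give Borel-fixedness, and a parameterization of the monomials outside $I$ together with two classical binomial identities will identify the Hilbert series.

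Every generator $\prod_{i\in A} x_{ib_i}$ of $I_A$ uses at most one variable per row and is therefore squarefree; hence $I = \sum_{A} I_A$ is generated by squarefree monomials and so is radical. To see Borel-fixedness, observe that replacing a factor $x_{ib_i}$ by $x_{ib'_i}$ with $b'_i < b_i$ in a generator of $I_A$ only increases $\sum_{i\in A}(d_i - b_i)$, so the resulting monomial still lies in $I_A$; hence each $I_A$, and therefore $I$, is Borel fixed.

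For the Hilbert series, I would parameterize each monomial $\mu \in S\setminus I$ by the pair $(B,\beta)$, where $B = \{i : \text{some } x_{ij} \mid \mu\} \subseteq [m]$ and $\beta_i = \min\{j : x_{ij} \mid \mu\}$ for $i \in B$. Given such data, the monomials with this parameterization form a set with multigraded Hilbert series $\prod_{i\in B}\frac{y_i}{(1-y_i)^{d_i - \beta_i + 1}}$, since along each row $i\in B$ one freely chooses a monomial in the $d_i - \beta_i + 1$ variables $x_{i\beta_i},\dots,x_{id_i}$ in which $x_{i\beta_i}$ appears at least once. The condition $\mu \notin I$ unpacks to $\sum_{i\in A}(d_i - \beta_i) < \Phi(A)$ for every $\emptyset \neq A \subseteq B$, and the monotonicity of $\Phi$ together with $d_i - \beta_i \geq 0$ shows that all these inequalities follow from the single top one $\sum_{i\in B}(d_i - \beta_i) < \Phi(B)$.

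Setting $c_i = d_i - \beta_i$ and expanding $\frac{y_i}{(1-y_i)^{c_i + 1}} = \sum_{a_i \geq 1}\binom{c_i + a_i - 1}{c_i} y_i^{a_i}$, matching the resulting expression with the definition of $H_\Phi$ reduces the Hilbert series identity to the combinatorial statement
\[
\sum_{\substack{c\in\NN^B \\ |c| < \Phi(B)}} \prod_{i\in B}\binom{c_i + a_i - 1}{c_i} = \binom{\Phi(B) - 1 + \sum_{i\in B} a_i}{\Phi(B) - 1}
\]
for every nonempty $B$ and every $(a_i)_{i\in B}$ with $a_i \geq 1$. This follows by first applying the Vandermonde convolution $\sum_{|c|=t}\prod_{i\in B}\binom{c_i + a_i - 1}{c_i} = \binom{t + \sum_{i} a_i - 1}{t}$ and then the hockey-stick identity $\sum_{t=0}^{\Phi(B)-1}\binom{t + s - 1}{t} = \binom{\Phi(B) + s - 1}{\Phi(B) - 1}$ with $s = \sum_i a_i$. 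The main obstacle is recognizing that the exponentially many inequalities $\sum_{i\in A} c_i < \Phi(A)$ collapse to the single constraint $\sum_{i\in B} c_i < \Phi(B)$ via monotonicity of $\Phi$; once this reduction is in place, the rest is a direct application of the two classical binomial identities above.
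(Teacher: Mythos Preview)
Your argument is correct and follows essentially the same route as the paper: both parameterize the monomials outside $I$ by their row-wise minimum indices, use the monotonicity of $\Phi$ to collapse the family of constraints $\sum_{i\in A}(d_i-\beta_i)<\Phi(A)$ to the single one with $A=B$, and reduce the count to the same binomial identity. The only cosmetic difference is that you verify the final identity via Vandermonde plus hockey-stick, whereas the paper reads it off directly as the number of monomials of total degree at most $\Phi(B)-1$ in a set of $\sum_{i\in B} a_i$ variables.
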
 

\begin{proof} 
It is clear that each $I_A$ is radical and Borel fixed. It then follows that also $I$ is radical and Borel fixed. 
It remans to prove that $S/I$ has multigraded Hilbert series equal to $H_\Phi(y_1,\dots,y_m)$. In other words,  we have to prove that the number of monomials of $S$ not in $I$ and of multidegree $a\in \NN^m$ is exactly $\binom{\Phi(A)-1+|a|}{\Phi(A)-1}$ where $A=\Supp(a)$.  Whenever $A\subseteq B$  the homogeneous components of $I_A$ of multidegrees bigger than or equal to $\sum_{i\in B} e_i$ are already  contained  in $I_B$ because, by assumption, $\Phi(A)\geq \Phi(B)$. 
Therefore it suffices to prove that for every $a\in \NN^m$ and $A=\Supp(a)$ the number of monomials not in $I_A$ and of multidegree $a$ is exactly $\binom{\Phi(A)-1+|a|}{\Phi(A)-1}$. 
Without loss of generality we may also assume that $A=\{1,\dots,m\}$ and then set $J=I_A$ and $u=\Phi(A)$. Then 
$$J=( x_{1b_1}\cdots x_{mb_m} : 1\leq b_i\leq  d_i \mbox{ and } b_1+\cdots +b_m\leq d_1+\dots +d_m-u)$$
and we have to prove that the number of monomials not in $J$ and of multidegree $a\in \NN^m$  and full support  $\Supp(a)=\{1,\dots, m\}$ is exactly $\binom{u-1+|a|}{u-1}$. 
Let $c=(c_1,\dots,c_m)\in \NN^m$ with $1\leq c_i\leq d_i$ such that 
\begin{equation}
\label{ac1} 
\sum_{i=1}^m c_i>\sum_{i=1}^m d_i-u.
\end{equation} 
For such a vector  $c$ and given $a\in \NN_{>0}^m$ we consider the set $X_c$ of monomials $M$ of multidegree $a$ such that for every $i$ we have $\min\{ j :  x_{ij}|M \}=c_i$. By construction, the set of the monomials of multidegree $a$ that are not in $J$ is the disjoint union of the $X_c$'s where $c$ runs in the set of vectors specified above. Since $$\# X_c=\prod_{i=1}^m  \binom{d_i-c_i+a_i-1}{a_i-1}$$ it is enough to prove the following identity: 
\begin{equation}
\label{ac2} 
\sum_{c} \prod_{i=1}^m  \binom{d_i-c_i+a_i-1}{a_i-1}=\binom{u-1+|a|}{u-1}
\end{equation} 
where the sum runs over all the vectors $c$ described above.  
Setting $w_i=d_i-c_i$ we have to prove that 
\begin{equation}
\label{ac4}
\sum_{w} \prod_{i=1}^m  \binom{w_i+a_i-1}{a_i-1}=\binom{u-1+|a|}{u-1}
\end{equation} 
where the sum runs over all the vectors $w\in \NN^m$ such that $0\leq w_i\leq d_i-1$ and 
\begin{equation}
\label{ac5}\sum_{i=1}^m w_i\leq u-1.
\end{equation}
Since, by construction,  $u\leq d_i$ for every $i$, the restriction $w_i\leq d_i-1$ is subsumed by (\ref{ac1}). Now the identity (\ref{ac4}) is easy: Both the left and the right side count the number of monomials of total degree $\leq u-1$ in a set of variables which is a disjoint union of subsets of cardinality $a_1,a_2,\dots,a_m$. 
\end{proof}

One can also identify the prime decomposition of the ideal described in Lemma \ref{lemac2}. 
For $v=(v_1,\dots,v_m)\in \NN^m$  with $v_i\leq d_i$ let $P_v$ be the prime Borel fixed ideal associated to $v$, that is, 
$$P_v=(x_{ij} : 1\leq i\leq m \mbox{ and } 1\leq j\leq v_i ).$$

\begin{lemma}
\label{lemac3a}  Let  $I$ be the ideal of Lemma \ref{lemac2}. For every $v\in \NN^m$ with $v_i\leq d_i$  set $A(v)=\{ i : v_i<d_i\}$.  The following are equivalent: 
\begin{itemize}
\item[(1)] $P_v\supseteq I$,  
\item[(2)] $\displaystyle \sum_{i=1}^m v_i \geq \sum_{i=1}^m d_i-\Phi(A(v))-|A(v)|+1$. 
\end{itemize} 
\end{lemma}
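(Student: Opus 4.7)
The plan is to translate the ideal-theoretic containment $P_v\supseteq I$ into a family of numerical inequalities, one for each subset $A\subseteq[m]$, and then to show that this family is controlled by its single critical instance $A=A(v)$.

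First I would use that $I=\sum_A I_A$ is monomial and $P_v$ is a monomial prime to reduce $P_v\supseteq I$ to the condition that every listed generator $\prod_{i\in A} x_{ib_i}$ has at least one factor $x_{ib_i}$ with $b_i\le v_i$. Negating, $P_v\not\supseteq I_A$ precisely when one can choose $b_i$ with $v_i<b_i\le d_i$ for every $i\in A$ subject to $\sum_{i\in A}(d_i-b_i)\ge\Phi(A)$. Such a choice forces $A\subseteq A(v)$ (otherwise some $i\in A$ has $v_i=d_i$ and no $b_i>v_i$ exists in $[1,d_i]$), and among admissible tuples the sum $\sum_{i\in A}(d_i-b_i)$ attains its maximum at $b_i=v_i+1$. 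Hence the equivalent reformulation is
\[ \sum_{i\in A}(d_i-v_i)\le \Phi(A)+|A|-1 \qquad\text{for every } A\subseteq A(v). \]

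Next I would observe that, using $v_i=d_i$ for $i\notin A(v)$, the inequality in~(2) is exactly the $A=A(v)$ case of this family, so $(1)\Rightarrow(2)$ is immediate. For the converse, given $A\subsetneq A(v)$, I would use that $d_i-v_i\ge 1$ for each $i\in A(v)\setminus A$ to get
\[ \sum_{i\in A}(d_i-v_i)\le \sum_{i\in A(v)}(d_i-v_i)-(|A(v)|-|A|), \]
and then combine this with the hypothesis at $A(v)$ and the monotonicity axiom $\Phi(A)\ge\Phi(A(v))$ to recover the required inequality at $A$.

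The only place that requires real care is the first step, specifically verifying that $b_i=v_i+1$ really is optimal: this depends on the constraint $b_i\le d_i$ being nonbinding, which is exactly guaranteed by $A\subseteq A(v)$. Once this translation is done, the remainder of the argument is a short combinatorial comparison driven purely by the two axioms $\Phi(\emptyset)=n$ (unused here) and $\Phi(A)\ge\Phi(B)$ for $A\subseteq B$.
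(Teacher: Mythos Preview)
Your proposal is correct and follows essentially the same approach as the paper: both arguments hinge on the observation that the ``worst'' generator to test against $P_v$ is $\prod_{i\in A(v)} x_{i,v_i+1}$, and both use the monotonicity of $\Phi$ together with $d_i-v_i\ge 1$ on $A(v)\setminus A$ to reduce the subset $A$ case to the $A(v)$ case. The only difference is organizational---you first package $P_v\supseteq I$ as the full family of inequalities and then collapse it to the single instance $A=A(v)$, whereas the paper argues each implication directly (constructing the bad monomial for $(1)\Rightarrow(2)$, and for $(2)\Rightarrow(1)$ deriving the contradiction $\Phi(A(v))\ge\Phi(A)+1$).
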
 

\begin{proof} 
For the implication  $(1) \Rightarrow (2)$ we assume, by contradiction, that there exists a $v$ such that $P_v\supseteq I$ and $v$ does not satisfy (2). Setting $A=A(v)$, one has that 
$$\sum_{i\in A} v_i\leq  \sum_{i\in A} d_i-\Phi(A)-|A|.$$ Now let $b_i=v_i+1$ for every $i\in A$. By construction, $b_i\leq d_i$ and 
$$\sum_{i\in A} b_i=\sum_{i\in A} v_i+|A|\leq \sum_{i\in A} d_i-\Phi(A).$$
Hence $\prod_{i\in A} x_{ib_i}\in I$ and, by construction, $\prod_{i\in A} x_{ib_i}\not\in P_v$, contradicting the assumption. 

To prove the converse, assume that $v$ satisfies (2) and let $M=\prod_{i\in A} x_{ib_i}$ be a generator of $I$ with $A\subset [m]$. Let $B=A(v)$. 
If there exists $i\in A\setminus B$, then $v_i=d_i$ and hence $x_{ib_i}\in P_v$, that is,  $M\in P_v$. If instead $A\subseteq  B$ and  $b_i>v_i$ for every  $i\in A$ we have: 
$$\begin{array}{ll}
\displaystyle \sum_{i\in A} d_i-\Phi(A)\geq  \sum_{i\in A} b_i\geq  \sum_{i\in A} v_i +|A|=\sum_{i\in B} v_i  -\sum_{i\in B\setminus  A} v_i +|A| \\
\displaystyle  \geq \sum_{i\in B} d_i -\Phi(B)-|B|+1+|A|-\sum_{i\in B\setminus  A} v_i 
\end{array}$$
Hence 
$$\begin{array}{ll}
\displaystyle  -\Phi(A)\geq \sum_{i\in B\setminus A } (d_i-v_i)  -\Phi(B)-|B|+1+|A| \geq\\ 
\displaystyle   \geq  | B\setminus A|  -\Phi(B)-|B|+1+|A|=  -\Phi(B)+1
\end{array}$$
that is, 
$$\Phi(B)\geq \Phi(A)+1$$
a contradiction.  Therefore there exists an index $i\in A$ such that $v_i\geq b_i$. This implies that the monomial $M$ is in $P_v$. 
\end{proof} 

\begin{lemma}\label{lemac3} 
The ideal $I$ of Lemma \ref{lemac2} has the following  irredundant   prime decomposition:
$$I=    \cap_{v}   P_v$$
where $v=(v_1,\dots,v_m)$ varies among the vectors of $\NN^m$ such that: 
\begin{itemize}
\item[(1)]  $0\leq v_i \leq d_i$ for every $i=1,\dots,m$, 
\item[(2)]  $\displaystyle \sum_{i=1}^m v_i= \sum_{i=1}^m d_i-\Phi(A(v))-|A(v)|+1$ where $A(v)=\{ i : v_i<d_i\}$,
\item[(3)] $\Phi(B)<\Phi(A(v))$ for every $B\supsetneq A(v)$.
\end{itemize} 
\end{lemma}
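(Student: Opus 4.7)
The plan is to combine Lemma~\ref{lemac3a} with an analysis of minimality in the componentwise order. Since by Lemma~\ref{lemac2} the ideal $I$ is radical and Borel fixed, it equals the intersection of its minimal primes, and each such prime is itself Borel fixed and hence of the form $P_v$ for a unique $v$. Because $P_v\subseteq P_w$ precisely when $v_i\le w_i$ for every $i$, the minimal primes of $I$ are indexed by the minimal elements, in the componentwise order, of the set $\mathcal{V}$ of vectors satisfying (1) and the inequality $\sum v_i\ge\sum d_i-\Phi(A(v))-|A(v)|+1$, which by Lemma~\ref{lemac3a} is exactly the set of $v$ with $P_v\supseteq I$. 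It therefore suffices to show that the minimal elements of $\mathcal{V}$ are precisely the vectors satisfying (1), (2), (3); irredundancy of the resulting decomposition follows automatically since those vectors form an antichain.

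For one direction, I would first show that if $v\in\mathcal{V}$ is minimal then the inequality defining $\mathcal{V}$ must be an equality: otherwise, lowering some $v_i$ with $i\in A(v)$ by one (or lowering any $v_i$ when $A(v)=\emptyset$) would keep $A(v)$ unchanged and produce a strictly smaller element of $\mathcal{V}$. Next, if condition (3) failed, there would exist $B\supsetneq A(v)$ with $\Phi(B)=\Phi(A(v))$ (using the monotonicity $\Phi(B)\le\Phi(A(v))$). Defining $v'$ by $v'_i=d_i-1$ for $i\in B\setminus A(v)$ and $v'_i=v_i$ otherwise, one checks that $A(v')=B$, that $v'<v$, and that equality in (2) for $v$ transfers to $v'$ precisely because $\Phi(B)=\Phi(A(v))$, again contradicting minimality.

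For the converse, suppose $v$ satisfies (1), (2), (3) and let $w\in\mathcal{V}$ with $w\le v$. Setting $A=A(v)$ and $A'=A(w)$ one has $A\subseteq A'$, and a direct count gives
$$\sum w_i=\sum v_i-\sum_{i\in A}(v_i-w_i)-\sum_{i\in A'\setminus A}(d_i-w_i).$$
Combining this with condition (2) for $v$ and with the inequality $\sum w_i\ge\sum d_i-\Phi(A')-|A'|+1$ reduces the membership $w\in\mathcal{V}$ to
$$\Phi(A')-\Phi(A)+|A'\setminus A|\ge\sum_{i\in A}(v_i-w_i)+\sum_{i\in A'\setminus A}(d_i-w_i).$$
If $A'=A$, the left side vanishes while each summand on the right is nonnegative, forcing $w_i=v_i$ for $i\in A$, and hence $w=v$ since $w_i=d_i=v_i$ for $i\notin A$. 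If instead $A'\supsetneq A$, condition (3) gives $\Phi(A')\le\Phi(A)-1$, so the left side is at most $|A'\setminus A|-1$, whereas each term $d_i-w_i$ with $i\in A'\setminus A$ is at least $1$ and the right side is therefore at least $|A'\setminus A|$, a contradiction. Hence $v$ is minimal in $\mathcal{V}$.

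The main technical point is the bookkeeping in the last paragraph: the telescoping cancellations among $\Phi(A')-\Phi(A)$ and the cardinalities $|A'|$ and $|A|$ are what force condition (3) to play a role, and (3) is designed precisely so that the strict inequality $\Phi(A')<\Phi(A)$ on proper supersets of $A$ compensates for the extra contribution coming from $A'\setminus A$ on the right-hand side of the displayed inequality.
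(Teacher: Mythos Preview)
Your approach mirrors the paper's almost exactly: use Lemma~\ref{lemac3a} to identify the set $\mathcal V$ of $v$ with $P_v\supseteq I$, then characterise its componentwise-minimal elements by showing that (2) and (3) are each necessary (via lowering a coordinate) and jointly sufficient (via the telescoping comparison you carry out). The sufficiency argument is correct and is in fact slightly cleaner than the paper's, since you compare against an arbitrary $w\in\mathcal V$ rather than only those $w$ satisfying the equality.

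Two corner cases need a word, and the paper addresses both explicitly. First, when you argue that strict inequality in (2) lets you lower some $v_i$ with $i\in A(v)$, you must check such an $i$ with $v_i>0$ exists; otherwise $v_i=0$ for all $i\in A(v)$ and the strict inequality reads $0>\sum_{i\in A(v)}d_i-\Phi(A(v))-|A(v)|+1$, which is impossible since each $d_i\ge 1$ and $\Phi(A(v))\le d_j$ for any $j\in A(v)$. (Your parenthetical about $A(v)=\emptyset$ does not literally keep $A(v)$ unchanged, though the resulting vector still lies in $\mathcal V$.) Second, when (3) fails and you set $v'_i=d_i-1$ for $i\in B\setminus A(v)$, this requires $d_i\ge 1$; if some such $d_i=0$ then $\Phi(A(v))=\Phi(B)\le\Phi(\{i\})=0$, and (2) yields $|A(v)|-1=\sum_{i\in A(v)}(d_i-v_i)\ge |A(v)|$, a contradiction.
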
  

\begin{proof} Since $I$ is radical and Borel fixed, it is the intersection of the $P_v$'s that contain it. They are described in Lemma \ref{lemac3a} and we have to prove that, among them, those that are minimal over $I$ are exactly those described by conditions (2) and (3). First observe that if $v$ verifies 
\begin{equation}\label{ac23}
\sum_{i=1}^m v_i> \sum_{i=1}^m d_i-\Phi(A(v))-|A(v)|+1
\end{equation} 
then $P_v$ is not minimal over $I$. 
To this end we set  $w=v-e_i$ where $i\in A(V)$ and $v_i>0$. Such an $i$ exists, since otherwise from (\ref{ac23}) one would deduce that 
$$0> \sum_{i\in A(V)} d_i-\Phi(A(v))-|A(v)|+1$$
which is a contradiction because $d_i>0$ for every $i\in A(v)$.
For such a $w$ we have $A(w)=A(v)$ and hence  
 $$ \sum_{i=1}^m w_i \geq \sum_{i=1}^m d_i-\Phi(A(w))-|A(w)|+1$$
 that implies $I\subseteq P_w\subsetneq P_v$. 
 
Secondly we show that if $v$ satisfies (2) but not (3) then  $P_v$ is not minimal over $I$. By assumption there exists $i\in [m]$, $i\not\in A(v)$, such that if we set $B=A(v)\cup\{i\}$ we have $\Phi(B)=\Phi(A(v))$. If $d_i>0$ the we may set $w=v-e_i$ so that $B=A(w)$ . Then one  checks that $w$ satisfies (1) and hence $I\subseteq P_w\subsetneq P_v$. If instead $d_i=0$ then $\Phi(B)=0$ because $i\in B$ and hence $\Phi(A(v))=0$. Then by (2)
 $$|A(v)|=\sum_{i\in A(v)} (d_i-v_i)+1$$ 
 which is a a contradiction since $d_i>v_i$ for every $i\in A(v)$. 
 
Finally we have to check that every $v$ satisfying (2) and (3) corresponds to a prime which is minimal over $I$. By contradiction, let $v$ satisfy (2) and (3), and assume that there exists a $w$ satisfying (2) such that $w\leq v$ componentwise and $w_j<v_j$ for some $j\in [m]$. Then $A(v)\subseteq A(w)$ and $j\in A(w)$. If  $A(v)=A(w)$ then 
$$\sum_{i=1}^m d_i-\Phi(A(w))-|A(w)|-1 = \sum_{i=1}^m w_i<\sum_{i=1}^m v_i=\sum_{i=1}^m d_i-\Phi(A(v))-|A(v)|-1,$$ 
a contradiction because the first and last expression are equal. If instead $A(v)\subsetneq A(w)$ then 
$$\sum_{i=1}^m (v_i-w_i)=|A(w)|-|A(v)|+\Phi(A(w))-\Phi(A(v))$$
because both $v$ and $w$ satisfy (2). On the other hand, 
$$\sum_{i=1}^m (v_i-w_i)\geq |A(w)|-|A(v)|$$
because for every $j\in A(w)\setminus A(v)$ one has $d_j=v_j>w_j$. Summing up, 
$$\Phi(A(w))-\Phi(A(v))\geq 0$$
that is, 
$$\Phi(A(w))=\Phi(A(v))$$
contradicting the assumption that $v$ satisfies (3). 
\end{proof}

\section{Ideals generated by $2$-minors}\label{2min}

We now apply Lemmas %~\ref{lemac1},
 \ref{lemac2} and~\ref{lemac3} to obtain a description of the generic initial ideal for ideals of $2$-minors of multigraded matrices. Up to a transposition of the matrix, it is not restrictive to assume that the matrix is row graded. 

For integers $m,n$ let $S=K[x_{ij} : i=1,\dots,m \mbox{ and } j=1,\dots, n]$ be endowed with the multigrading induced by $\deg x_{ij}=e_i\in \ZZ^m$. Let $L=(\ell_{ij})$ be a $m\times n$ matrix of linear forms with  $\deg \ell_{ij}=e_i$.  We know by \cite{CDG2} that the ideal $I_2(L)$ of $2$-minors of $L$ is a Cartwright-Sturmfels ideal and we want to describe  its generic initial ideal. 

Consider the surjective map of $K$-algebras 
\begin{equation}\label{ac6}
\psi: K[x_{ij}]\to K[\ell_{ij}] \mbox{ defined by } \psi(x_{ij})=\ell_{ij} 
\end{equation} 
Its kernel is of the form $\sum_{i=1}^m W_i$, where $W_i$ is the  $K$-subspace of $S_{e_i}$ corresponding to the $K$-linear dependence relations on the $i$-th row of $L$. Going modulo the $2$-minors on each side of the map (\ref{ac6}) we get an isomorphism
\begin{equation}\label{ac7}
K[x_{ij}]/I_2(X)+\Ker(\psi) \simeq  K[\ell_{ij}]/I_2(L) 
\end{equation} 
where $X=(x_{ij})$. Identifying $K[x_{ij}]/I_2(X)$ with the Segre product 
$$R=K[x_iy_j : i=1,\dots,m \mbox{ and } j=1,\dots, n]$$  
via the map sending $x_{ij}$ to $x_iy_j$, we obtain an isomorphism 
\begin{equation}\label{ac8}
R/J \simeq  K[\ell_{ij}]/I_2(L)
\end{equation} 
where $J=\sum_{i=1}^m (W'_ix_i)$ where each $W'_i$ is  a space of linear forms in $y_1,\dots, y_n$. 
Now for a subset $A$ of $[m]$ we set
$$\phi(A)=n-\dim_K \sum_{a\in A} W'_a$$
Since the ideal $J$ is monomial in the variables $x_i$, it is easy to compute the multigraded Hilbert series of the quotient it defines. It turns out that the multigraded Hilbert series of $R/J$, and hence of $K[\ell_{ij}]/I_2(L)$, is given by 
$$\sum_{A\subseteq [m]} \sum_{a\in \ZZ^m \atop A=\Supp(a) } \binom{ \phi(A)-1+|a|}{\phi(A)-1} y^a.$$
Hence, by Lemmas~ %\ref{lemac1}, 
\ref{lemac2} and~\ref{lemac3} we conclude

\begin{thm}\label{2minthm} 
With the notation above one has:
\begin{itemize} 
\item[(1)] $I_2(L)$ is a Cartwright-Sturmfels ideal.
\item[(2)] The  generic initial ideal of $I_2(L)$  is generated by the monomials of the form 
$\prod_{i\in A} x_{ib_i}$ such that 
$$\begin{array}{ll}
A\subseteq  [m], \; \; 1\leq b_i\leq n-\dim_K V_{i} \mbox{ for every } i\in A, \mbox{ and } \\  \\ 
\displaystyle \sum_{i\in A} b_i \leq  n(|A|-1)+\dim_K V_A- \sum_{i\in A} \dim_K V_{i} 
\end{array}$$
where $V_i=\{ \lambda\in K^n  | \sum_{i=1}^n  \lambda_j L_{ij}=0\}$ and $V_A=\sum_{i\in A} V_i$. 
\item[(3)] The irredundant  prime decomposition of  $\gin(I_2(L))$ is given by 
$$\gin(I_2(L))=\cap_{v}P_v$$ 
where $v$ varies among the vectors $v\in \NN^m$ satisfying the following conditions:

$$ 0\leq v_i \leq n-\dim_K V_i \mbox{ for every } i=1,\dots,m, $$
$$ \sum_{i=1}^m v_i=n(m-1)+1-|A|+\dim_K V_A- \sum_{i\in A} \dim_K V_{i} \mbox{  where }A=\{ i : v_i<n-\dim_K V_i \},$$
$$\dim_K V_B>\dim_K V_A \mbox{  for every }B\supsetneq A.$$

\end{itemize} 
\end{thm}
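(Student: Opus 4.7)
The plan is to apply Lemmas~\ref{lemac2} and~\ref{lemac3} with the set function $\phi$ introduced above. Part~(1) is already established in~\cite{CDG2}. For~(2) and~(3) the argument rests on three ingredients: (a) $\phi$ satisfies the hypotheses of Lemma~\ref{lemac2}, (b) the multigraded Hilbert series of $R/J$ is exactly $H_\phi$, and (c) radical Borel fixed ideals are determined by their Hilbert series, which is~\cite[Theorem~3.5]{CDG1}.

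I would first check (a): $\phi(\emptyset)=n$ since $W'_\emptyset=0$, and $\phi$ is non-increasing under containment because $W'_A\subseteq W'_B$ whenever $A\subseteq B$. Next I would compute the Hilbert series of $R/J$ degree by degree. With the multigrading $\deg x_i=e_i$ and $\deg y_j=0$ on $R=K[x_iy_j]$, the space $R_a$ in multidegree $a$ with $A=\Supp(a)$ is identified with $T_{|a|}$ (where $T=K[y_1,\dots,y_n]$) via $\prod x_i^{a_i}\cdot M\mapsto M$, since the $x$-part is forced. The generators $W'_i\cdot x_i$ of $J$ contribute to $J_a$ only for $i\in A$, and under this identification $J_a$ becomes $W'_A\cdot T_{|a|-1}$. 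Hence $(R/J)_a\cong (T/(W'_A))_{|a|}$, a degree-$|a|$ component of a polynomial ring in $\phi(A)=n-\dim W'_A$ variables, whose dimension is $\binom{\phi(A)-1+|a|}{\phi(A)-1}$. Summing over $a$ gives $H_\phi$.

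Combining these three facts, Lemma~\ref{lemac2} produces a radical Borel fixed ideal $I$ with Hilbert series $H_\phi$, while $\gin(I_2(L))$ is likewise radical and Borel fixed with the same Hilbert series $H_\phi$ (by~(1) and~(b)), so \cite[Theorem~3.5]{CDG1} forces $\gin(I_2(L))=I$. The description in~(2) follows from Lemma~\ref{lemac2} after substituting $d_i=\phi(\{i\})=n-\dim V_i$ and $\Phi(A)=n-\dim V_A$, and similarly Lemma~\ref{lemac3} yields~(3). The one step requiring attention is the identification $W'_i\cong V_i$ used in this substitution: an element $\sum_j\lambda_j y_j\in W'_i$ corresponds, under $x_iy_j\leftrightarrow x_{ij}$, to $\sum_j\lambda_j x_{ij}\in\ker(\psi)$, which by definition of $\psi$ means $\sum_j\lambda_j\ell_{ij}=0$, i.e., $\lambda\in V_i$. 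Thus $\dim W'_A=\dim V_A$ for every $A\subseteq[m]$, making the translation between the two sets of notation unambiguous; with this dictionary in hand, the bulk of the work is already contained in the two lemmas of the preceding section.
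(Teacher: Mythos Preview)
Your proof is correct and follows essentially the same route as the paper: compute the multigraded Hilbert series of $K[\ell_{ij}]/I_2(L)\cong R/J$ as $H_\phi$, then invoke Lemmas~\ref{lemac2} and~\ref{lemac3} together with the uniqueness of radical Borel fixed ideals with a given Hilbert series. You simply spell out several steps the paper leaves implicit---the verification that $\phi$ satisfies the hypotheses, the degree-by-degree Hilbert series computation, and the identification $W'_i\cong V_i$.
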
 

\begin{rmk} 
Notice that the argument above gives another proof of the fact that the ideal of $2$-minors of a multigraded matrix is Cartwright-Sturmfels. 
\end{rmk} 

\begin{rmk}  
In the case of non-maximal minors of size at least $3$, the corresponding ideals are not Cartwright-Sturmfels. 

For example, let $X$ be a matrix of variables of size $4\times 4$ and let $\tau$ be the revlex order. Then $\gin_{\tau}(I_3(X))$ has $x_{1,1}^2x_{2,2}x_{3,2}x_{4,2}$ 
among its minimal generators. 

In addition, let $X$ be a matrix of variables of size $4\times 5$ and let $\tau$ be the lex order associated to the following ordering of the variables:
$$x_{11},x _{22},x _{33},x _{44},
x _{12},x _{23},x _{34}, x_{45}, 
x _{21},x _{32},x _{43},
x _{13},x _{24}, x_{35},
x _{31},x _{42},
x_{14},x_{25},
x _{41},
x_{15}
.$$
Then $\inid(I_3(X))$ has $x_{12}x_{23}x_{31}x_{45}^2$ and $ x_{12}x_{23}x_{31}x_{44}^2$ among its minimal generators and it does not define a Cohen-Macaulay ring. 

Nevertheless it would be interesting to compute the multigraded generic initial ideal in these cases as well. 
\end{rmk}

\end{document}